\newtheorem{theorem}{Theorem}[section]
\newtheorem{lem}[theorem]{Lemma}
\newtheorem{prop}[theorem]{Proposition}
\theoremstyle{definition}
\theoremstyle{remark}
\DeclareRobustCommand{\stirling}{\genfrac\{\}{0pt}{}}
\numberwithin{equation}{section}
\begin{document}

\title{A $q$-analogue for Euler's evaluations of the Riemann zeta function}
%A $q$ - analogue for Euler's $\zeta(2k)=\dfrac{(-1)^{k+1}2^{2k}B_{2k}\pi^{2k}}{2(2k)!}$
%    Remove any unused author tags.

%    author one information
\author{Ankush Goswami}
\address{Department of Mathematics, University of Florida, 
Gainesville, Fl 32603}
%\curraddr{}
\email{ankush04@ufl.edu}
\thanks{}

%    author two information
%\author{}
%\address{}
%\curraddr{}
%\email{}
%\thanks{}

\subjclass[2010]{11N25, 11N37, 11N60}

\keywords{Riemann zeta function, Stirling numbers of second kind, triangular numbers, upper half plane.}

\date{}

%\dedicatory{}

\begin{abstract}
We provide a $q$-analogue of Euler's formula for $\zeta(2k)$ for $k\in\mathbb{Z}^+$. Our main results are stated in Theorems 3.1 and 3.2 below. The result generalizes a recent result of Z.W. Sun who obtained $q$-analogues of $\zeta(2)=\pi^2/6$ and $\zeta(4)=\pi^4/90$.  
%In the process we also obtain some interesting properties of binomial coefficients and stirling numbers of second kind.   
\end{abstract}

\maketitle
\section{Introduction}\mbox{}\\
Recently, Sun obtained a very nice $q$-analogue of Euler's formula $\zeta(2)=\pi^2/6$. Motivated by this, the author obtained the $q$-analogue of $\zeta(4)$ and noted that it was simultaneously and independently obtained by Sun \cite{Sun}. The author then obtained the $q$-analogue of $\zeta(6)$ in \cite{AG} but realized that this transition to the $q$-analogue of $\zeta(6)$ is more difficult as compared to $\zeta(2)$ and $\zeta(4)$. This difficulty arises due to an extra term that shows up in the identity; however in the limit as $q\rightarrow 1^{-}$, this term $\rightarrow 0$. Thus it is necessary to study the $q$-analogue of Euler's celebrated formula
\begin{eqnarray}\label{Euler}
\zeta(2k)=\dfrac{(-1)^{k+1}2^{2k}B_{2k}\pi^{2k}}{2(2k)!}
\end{eqnarray}  
for all $k\in\mathbb{Z}^+$. We will see shortly that this requires a consideration of two cases: $k$ even and $k$ odd separately (see Theorems \ref{main1}, \ref{main2} below). We also mention here that Zudilin \cite{Z} and Krattenthaler-Rivoal-Zudilin \cite{KRZ} have studied the Diophantine properties of $q$-zeta values, the sums appearing in the left-hand side of Theorems \ref{main1} and \ref{main2}.  
%For the general $q$-analogue formulation of Euler's formula for $\zeta(2k) \;(k=1,2,3...)$, we need to deal with two cases separately. We will see that the case when $k$ is an even number is easier to deal with than the case when $k$ is an odd number. 
\section{Notations}
For a positive integer $k$ we use the following standard notations. Let $B_{2k}$ denote the $2k$th Bernoulli number. Let $\stirling{n}{k}$ denote a Stirling number of the second kind, which is the number of ways of partitioning a set of $n$ objects into $k$ non-empty subsets. Let the complex upper half-plane be denoted by $\mathcal{H}=\{\tau\in\mathbb{C} :\mbox{Im}(\tau)>0\}$ and let  $SL_2(\mathbb{Z})$ denote the full modular group which is defined to be the set of all $2\times 2$ matrices with integer entries and determinant one. Also let $\Gamma_0(4)$ denote the well-known principal congruence subgroup of $SL_2(\mathbb{Z})$ defined by
\begin{eqnarray*}
\Gamma_0(4)=\left\{\left(\begin{array}{cc}a & b\\c & d\end{array}\right) \in SL_2(\mathbb{Z}): c\equiv 0\;(\mbox{mod 4})\right\},
\end{eqnarray*}
Finally, let $\mathcal{M}_{2k}(\Gamma_0(4))$ be the vector space of all weight $2k$ modular forms over $\Gamma_0(4)$ and $\mathcal{S}_{2k}(\Gamma_0(4))$ denote the subspace of $\mathcal{M}_{2k}(\Gamma_0(4))$ of all weight $2k$ cusp forms over $\Gamma_0(4)$. Let $q=e^{2\pi i\tau}$ where $\tau\in\mathcal{H}$. We define the Dedekind eta function, a well-known modular form of weight 1/2, by 
\begin{eqnarray}
\eta(\tau) = q^{1/24}\displaystyle\prod_{n=1}^\infty (1-q^n).
\end{eqnarray}
For a thorough treatment on modular forms, the interested readers should consult \cite{Koblitz, DS}. Finally, we denote the $n$th triangular number $T_n$ by
\begin{eqnarray}
T_n=\dfrac{n(n+1)}{2}, \;\;n=1,2,3....
\end{eqnarray}
and the corresponding generating function by 
\begin{eqnarray}
\psi(q)=\displaystyle\sum_{n=1}^\infty q^{T_n}.  
\end{eqnarray}
Also let $d_k$ be given by
\begin{eqnarray*}
d_k=-\dfrac{(-16)^k B_{2k}(4^k-1)}{8k}\in\mathbb{Q}.
\end{eqnarray*}
\section{Main theorems}
\begin{theorem}\label{main1}
Let $k\geq 2$ be an even integer. For a complex number $q$ with $|q|<1$ we have
\begin{eqnarray}\label{qaz4}
\ \ \ \ \ \ \ \ \ \ \ \ \sum_{n=0}^\infty \dfrac{2^{2k-1}q^{2n+1}\;P^e_{2k-2}(q^{2n+1})}{(1-q^{2n+1})^{2k}}-T_{2k}(\tau/2)=q^{k/2}d_k\prod_{n=1}^\infty\dfrac{(1-q^{2n})^{4k}}{(1-q^{2n-1})^{4k}}
\end{eqnarray}
where 
\begin{eqnarray}\label{polyeven}
P^e_{2k-2}(z)=\displaystyle\sum_{l=1}^{2k-1}(-1)^lb_{k}(l)z^{l-1}
\end{eqnarray} 
is a polynomial of degree $(2k-2)$ with integer coefficients
\begin{eqnarray}\label{evenbs}
b_k(l)=\sum_{m=0}^{2k-1}(-1)^m a_{k}(m)\binom{2k-m-1}{l}
\end{eqnarray}
where the $a_k(m)$ are defined by 
\begin{eqnarray}\label{evenas}
a_k(m)=\sum_{j=0}^{2k-1}j!(-1)^j\stirling{2k-1}{j}\binom{j}{m}.
\end{eqnarray}
%Also, $d_k$ is given by 
%\begin{eqnarray}
%d_k=-\dfrac{(-16)^k B_{2k}(4^k-1)}{8k}\in\mathbb{Q}
%\end{eqnarray}
and $T_{2k}(\tau)\in\mathcal{S}_{2k}(\Gamma_0(4))$, thus $T_{2k}(\tau/2)\rightarrow 0$ as $q\rightarrow 1$, where the limit is taken from inside the unit disk. In other words, Theorem $\ref{main1}$ gives a $q$-analogue of $(\ref{Euler})$ for $\zeta(2k)$ with $k$ even.
\end{theorem}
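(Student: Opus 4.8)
The plan is to start from the classical identity expressing the Eisenstein-type sum $\sum_{n\ge 0} n^{2k-1} x^n/(1-x^n)$ (or its odd-index analogue) in terms of modular forms, and to extract from it the finite polynomial $P^e_{2k-2}$. Concretely, I would first expand $q^{2n+1}/(1-q^{2n+1})^{2k}$ and related rational functions into Lambert series: the key combinatorial fact is that $\sum_{j\ge 1} j^{2k-1} z^j$ summed against the geometric series produces, after collecting terms, a rational function whose numerator is a polynomial in $z$ of degree $2k-2$. The coefficients of this polynomial are exactly the Eulerian-type numbers, and I expect the identities $(\ref{evenas})$ and $(\ref{evenbs})$ to arise from rewriting $j^{2k-1} = \sum_j j!\,\stirling{2k-1}{j}\binom{z}{j}$ (the Stirling expansion of a power in terms of falling factorials) and then re-expanding the binomial coefficients. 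So the first block of work is a purely formal manipulation of power series verifying that the left-hand sum $\sum_{n\ge 0} 2^{2k-1} q^{2n+1} P^e_{2k-2}(q^{2n+1})/(1-q^{2n+1})^{2k}$ equals a constant multiple of the Lambert series $\sum_{n\ge 1, \text{odd}} \sigma_{2k-1}^{\text{odd}}(n)\, q^n$, i.e. an Eisenstein series on $\Gamma_0(4)$.

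Next I would invoke the structure theory of $\mathcal{M}_{2k}(\Gamma_0(4))$. The combination appearing on the left — an explicit Eisenstein series minus the eta-quotient $q^{k/2} d_k \prod (1-q^{2n})^{4k}/(1-q^{2n-1})^{4k}$ (which, after accounting for the $q^{1/24}$ normalizations, is a weight-$2k$ eta-quotient on $\Gamma_0(4)$) — is by construction a modular form of weight $2k$ on $\Gamma_0(4)$. The content of the theorem is that $T_{2k}(\tau/2)$, defined as this difference, is a \emph{cusp} form. To prove that, I would check vanishing at each of the three cusps of $\Gamma_0(4)$ (namely $0$, $1/2$, and $\infty$). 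At $\infty$ this is immediate from the $q$-expansions: the constant term of the Eisenstein piece is arranged (via the definition of $d_k$, which carries the factor $B_{2k}(4^k-1)/8k$ — precisely the normalizing constant matching the Eisenstein series' constant term) to cancel. At the other two cusps I would use the known transformation of $\eta(\tau)$ under $SL_2(\mathbb{Z})$ together with the transformation of the Eisenstein series to compute the leading Fourier coefficients and show they vanish. Equivalently, one can argue more cheaply: the relevant Eisenstein subspace of $\mathcal{M}_{2k}(\Gamma_0(4))$ is spanned by explicit Eisenstein series whose constant terms at the cusps are classical; matching all of them forces membership in $\mathcal{S}_{2k}(\Gamma_0(4))$.

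The final, short step is the limit $q \to 1^-$. Since $T_{2k}(\tau/2)$ is a cusp form on $\Gamma_0(4)$, it vanishes at every cusp; the point $q \to 1^-$ along the real axis corresponds to $\tau \to 0$, which is the cusp $0$ (up to the $\Gamma_0(4)$-action), so $T_{2k}(\tau/2) \to 0$. Combined with the fact that the eta-quotient on the right also tends to the finite Euler-product value reproducing $(\ref{Euler})$ after the standard $q\to 1$ asymptotic analysis of $\prod(1-q^n)$, this yields the claimed $q$-analogue. I expect the \textbf{main obstacle} to be the combinatorial bookkeeping in the first step — proving rigorously that the triple nested sum defining $b_k(l)$ via $a_k(m)$ is exactly the numerator polynomial of the Lambert-series rational function, uniformly in $k$; the modular-forms portion is conceptually routine once the weight and level are pinned down, but getting the Stirling-number and binomial identities to telescope correctly (and confirming the degree is exactly $2k-2$, i.e. that the top coefficient does not vanish) will require care, and is presumably where the even/odd distinction in $(\ref{Euler})$'s $q$-analogue first enters.
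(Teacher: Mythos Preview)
Your combinatorial first block is exactly what the paper does: it rewrites $H_{2k}(\tau)$ (for $k$ even this is $\sum_{n\ \mathrm{even}}\sigma^{\#}_{2k-1}(n)q^n$, not a sum over odd $n$ as you wrote, but that is a detail) as a double sum, applies the Stirling identity $\sum_{n\ge 0} n^{l} z^n=\sum_{j}j!\stirling{l}{j}z^j/(1-z)^{j+1}$, and then performs two binomial re-expansions --- first $z^j=(1-(1-z))^j$ to get the $a_k(m)$, then $(1-z)^{2k-m-1}$ to get the $b_k(l)$. This part is routine and your expectation that it is the ``main obstacle'' is inverted relative to the paper: it takes half a page.

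Where you genuinely diverge is in the modular-forms step. The paper does \emph{not} verify that the difference $H_{2k}(\tau)-d_k q^k\psi^{4k}(q^2)$ is a cusp form by checking the three cusps of $\Gamma_0(4)$. Instead it simply invokes the identity $q^k\psi^{4k}(q^2)=d_k^{-1}(H_{2k}(\tau)-T_{2k}(\tau))$ with $T_{2k}\in\mathcal S_{2k}(\Gamma_0(4))$ as a black box from Atanasov--Bellovin--Loughman-Pawelko--Peskin--Potash (itself resting on Ono--Robins--Wahl). So the entire second block of your plan --- transforming $\eta$ and the Eisenstein series under $SL_2(\mathbb Z)$, computing leading Fourier coefficients at $0$ and $1/2$ --- is replaced by a citation. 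Your route is more self-contained and would make the paper independent of those references, at the cost of a page or two of cusp computations; the paper's route is shorter but imports the key structural fact.

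One small correction to your cusp analysis: at $\infty$ there is nothing to cancel, since both $H_{2k}(\tau)$ (which has no constant term by definition) and $q^k\psi^{4k}(q^2)$ already vanish there. The constant $d_k$ is not chosen to kill a constant term at $\infty$; its role is to match the behaviour at the other cusps (equivalently, to make the coefficient asymptotics of the two pieces agree). If you carry out your direct cusp verification you will see $d_k$ emerge at $0$ or $1/2$, not at $\infty$.
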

\begin{theorem}\label{main2}
Let $k\geq 1$ be an odd integer. For a complex number $q$ with $|q|<1$ we have
\begin{eqnarray}\label{qaz5}
\sum_{n=0}^\infty \dfrac{q^{2n+1}\;P^o_{4k-2}(q^{2n+1})}{(1-q^{2(2n+1)})^{2k}}-T_{2k}(\tau)=q^{k}d_k\prod_{n=1}^\infty\dfrac{(1-q^{4n})^{4k}}{(1-q^{4n-2})^{4k}}
\end{eqnarray}
where 
\begin{eqnarray}\label{polyodd}
P^o_{4k-2}(z)=(1+z)^{2k}P^e_{2k-2}(z)-2^{2k-1}zP^e_{2k-2}(z^2)
\end{eqnarray}
is a polynomial of degree $(4k-2)$ with integer coefficients and where $P^e_{2k-2}(z)$ is the polynomial defined in $(\ref{polyeven})$
% Also, $d_k$ is given by 
%\begin{eqnarray}
%d_k=-\dfrac{(-16)^k B_{2k}(4^k-1)}{8k}\in\mathbb{Q}
%\end{eqnarray}
and $T_{2k}(\tau)\in\mathcal{S}_{2k}(\Gamma_0(4))$, thus $T_{2k}(\tau)\rightarrow 0$ as $q\rightarrow 1$, where the limit is taken from inside the unit disk. In other words, Theorem $\ref{main2}$ gives a $q$-analogue of $(\ref{Euler})$ for $\zeta(2k)$ with $k$ odd.
\end{theorem}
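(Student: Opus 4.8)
The plan is to establish both $q$-analogues by the same mechanism: realize the left-hand side of each identity as a weight-$2k$ modular form on $\Gamma_0(4)$, identify its Eisenstein part explicitly, and show that the difference is a cusp form. First I would recall that the classical route to \eqref{Euler} passes through the Eisenstein series $E_{2k}(\tau)=1-\frac{4k}{B_{2k}}\sum_{n\ge 1}\sigma_{2k-1}(n)q^n$, and that $\zeta(2k)$ appears as the constant comparing the $q$-series to the Bernoulli/$\pi$ normalization. The $q$-analogue replaces the ``constant'' $\zeta(2k)$ by a Lambert-type series $\sum_{n\ge 0} q^{2n+1}P(q^{2n+1})/(1-q^{2n+1})^{2k}$; the polynomials $P^e_{2k-2}$ and the coefficients $a_k(m),b_k(l)$ are manufactured (via the Stirling-number expansion in \eqref{evenas}--\eqref{evenbs}) precisely so that
\begin{eqnarray*}
\sum_{n=0}^\infty\frac{2^{2k-1}q^{2n+1}P^e_{2k-2}(q^{2n+1})}{(1-q^{2n+1})^{2k}}=\sum_{j\ \mathrm{odd}}\frac{(\text{monomials of }j\text{ up to degree }2k-1)\,q^j}{(1-q^j)^{2k}},
\end{eqnarray*}
i.e. the rational-function identity $\sum_{l}(-1)^l b_k(l)z^l/(1-z)^{2k}=$ a specified polynomial combination expressing $j^{2k-1}z^j/(1-z^j)$-type weights; this is the identity one checks by expanding $1/(1-z)^{2k}=\sum_m\binom{2k+m-1}{m}z^m$ and matching coefficients against $\sum_j j!\,\stirling{2k-1}{j}\cdots$, which is the combinatorial heart of the construction.

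The key steps, in order, would be: (1) prove the elementary generating-function identity above, so that the first sum on the left of \eqref{qaz4} equals $\sum_{m\ \mathrm{odd}}\sigma^{\mathrm{odd}}_{2k-1}(m)q^m$ up to an explicit rational constant --- this reduces everything to odd divisor sums, which are exactly the building blocks of modular forms on $\Gamma_0(4)$; (2) recognize the resulting $q$-series as (a rational multiple of) the restriction to odd arguments of $E_{2k}$, equivalently a specific linear combination of $E_{2k}(\tau),E_{2k}(2\tau),E_{2k}(4\tau)\in\mathcal{M}_{2k}(\Gamma_0(4))$; (3) identify the right-hand side product $\prod(1-q^{2n})^{4k}/(1-q^{2n-1})^{4k}$ as an eta-quotient, namely $\bigl(\eta(2\tau)^2/\eta(\tau)\bigr)^{4k}$ up to a $q$-power — this is where the factor $q^{k/2}$ and the constant $d_k=-(-16)^kB_{2k}(4^k-1)/(8k)$ come from, the $(4^k-1)$ being the combinatorial fingerprint of passing from all divisors to odd divisors; (4) verify this eta-quotient is a holomorphic modular form of weight $2k$ on $\Gamma_0(4)$ vanishing at every cusp, hence lies in $\mathcal{S}_{2k}(\Gamma_0(4))$, by the standard valence/order-at-cusps computation (Ligozat's criterion); (5) define $T_{2k}(\tau)$ to be the difference between the left-hand divisor-sum form and the eta-quotient times $d_k$, observe it is a weight-$2k$ form on $\Gamma_0(4)$ whose $q$-expansion has vanishing constant term at every cusp, conclude $T_{2k}\in\mathcal{S}_{2k}(\Gamma_0(4))$, and invoke the fact that cusp forms have exponential decay, so $T_{2k}(\tau)\to 0$ as $\tau\to 0$ along the imaginary axis, i.e. as $q\to 1^-$. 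For Theorem \ref{main2} the argument is identical after the substitution $q\mapsto q^2$ combined with the elementary algebraic identity \eqref{polyodd}, which repackages the odd-argument sum in the $(1-q^{2(2n+1)})^{2k}$ denominator; the modular object is then the eta-quotient $\bigl(\eta(4\tau)^2/\eta(2\tau)\bigr)^{4k}$ on $\Gamma_0(16)$, or equivalently still on $\Gamma_0(4)$ after rescaling $\tau$.

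The main obstacle will be step (1) together with step (3)--(4): proving the rational-function/Stirling identity cleanly (one wants a slick proof via the operator $z\frac{d}{dz}$ acting on $z/(1-z)$ iterated $2k-1$ times, since $\bigl(z\frac{d}{dz}\bigr)^{n}\frac{z}{1-z}=\sum_{j}j!\stirling{n}{j}\frac{z^j}{(1-z)^{j+1}}$, and then re-expanding each $(1-z)^{-j-1}$ in powers of $(1-z)^{-2k}$), and then pinning down the exact normalizing constant $d_k$ and the exact $q$-power so that the two sides match as formal $q$-series and not merely up to an undetermined scalar — this requires carefully tracking the constant term of $E_{2k}$ (the $\zeta(2k)$ piece, via \eqref{Euler}) against the leading coefficient of the eta-quotient. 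The modularity verifications in (4) are routine once the correct eta-quotient is written down, but getting the cusp-vanishing at the cusp $1/2$ (the ``extra'' cusp of $\Gamma_0(4)$ beyond $0,\infty$) is the place where the ``extra term'' the introduction alludes to genuinely lives, and must be checked rather than assumed. Finally, the even/odd case split is forced because for $k$ even the relevant eta-quotient already transforms on $\Gamma_0(4)$ with trivial character, whereas for $k$ odd one must first rescale and the half-integral $q^{k/2}$ versus integral $q^{k}$ prefactors reflect exactly this dichotomy; I would present the even case in full and indicate the odd case as the analogous computation using \eqref{polyodd}.
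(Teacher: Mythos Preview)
Your combinatorial engine---step (1), realizing the Lambert series via $(z\tfrac{d}{dz})^{2k-1}$ applied to $z/(1-z)$ and the Stirling expansion---is exactly what the paper does: it shows the sum on the left equals the Eisenstein series $H_{2k}(\tau)=\sum\sigma^{\#}_{2k-1}(n)q^n$ (restricted to the appropriate parity of $n$), and the relation \eqref{polyodd} arises just as you say, by splitting $\sum_{n\ \mathrm{odd}}=\sum_n-\sum_{n\ \mathrm{even}}$ and using $\sigma^{\#}_{2k-1}(2n)=2^{2k-1}\sigma^{\#}_{2k-1}(n)$. Where you diverge is in steps (3)--(5). The paper does \emph{not} prove that the difference is a cusp form; it simply quotes the decomposition $q^k\psi^{4k}(q^2)=(H_{2k}(\tau)-T_{2k}(\tau))/d_k$ with $T_{2k}\in\mathcal{S}_{2k}(\Gamma_0(4))$ as a known theorem of Atanasov--Bellovin--Loughman-Pawelko--Peskin--Potash (following Ono--Robins--Wahl). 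Once $H_{2k}$ has been identified combinatorially, the identity \eqref{qaz5} is then immediate, and what remains is only the $q\to 1^-$ check.

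Your step (4), by contrast, is wrong as written: the eta-quotient $q^k\psi^{4k}(q^2)=\bigl(\eta(4\tau)^2/\eta(2\tau)\bigr)^{4k}$ is \emph{not} a cusp form. For $k=1,2$ one has $\dim\mathcal{S}_{2k}(\Gamma_0(4))=0$, so it equals $H_{2k}/d_k$ exactly and is purely Eisenstein; in general Ligozat's criterion gives order $0$ at the cusp $0$ of $\Gamma_0(4)$, not positive order. So the entire modular-forms burden falls on your step (5): showing that $H_{2k}-d_k q^k\psi^{4k}(q^2)$ vanishes at all three cusps. At $\infty$ this is trivial, but at $0$ and $1/2$ it is a genuine computation---one must match the constant term of the Eisenstein combination against the nonzero cusp value of the eta-quotient, and this is precisely what forces the specific constant $d_k$---which you have not indicated how to carry out. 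That computation \emph{is} the content of the cited theorem, so your route is valid in principle but substantially longer than the paper's, and the gap at step (5) is exactly the nontrivial input the paper imports as a black box.
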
\mbox{}\\
\textit{Two remarks:} 
\begin{enumerate}
\item Note that we are using (\ref{psi}) of Theorem \ref{Ono} (see below) to prove Theorems \ref{main1} and \ref{main2}. Clearly the left-hand of (\ref{psi}) is $q^k$ times a function of $q^2$. If the right-hand side of (\ref{psi}) also turns out to be a function of $q^2$, we can replace $q\rightarrow\sqrt{q}$ without affecting our results. This happens to be the case in Theorem \ref{main1} where we obtain expressions in $q^2$ for both the sum and product, thereby giving us $T_{2k}(\tau/2)$ in (\ref{qaz4}). However we do not obtain such expressions in $q^2$ on both sides of (\ref{qaz5}) in Theorem \ref{main2} and thus we get $T_{2k}(\tau)$ instead of $T_{2k}(\tau/2)$. However numerical calculations for $k=1, 3, 5$ suggest that we are likely to get expressions involving $q^2$ for the sum in (\ref{qaz5}) so that we can replace $q\rightarrow\sqrt{q}$, thereby getting $T_{2k}(\tau/2)$ in (\ref{qaz5}).
\item The cusp form $T_{2k}(\tau)$ in Theorems \ref{main1}, \ref{main2} is well-defined and uniquely determined by the difference of a $q$-series and a $q$-product as follows:
$$\ \ \ \ \ \ \ \ \ \ \ \ T_{2k}(\tau)=\left\{\begin{array}{cc}
\displaystyle\sum_{n=0}^\infty \dfrac{2^{2k-1}q^{4n+2}\;P^e_{2k-2}(q^{4n+2})}{(1-q^{4n+2})^{2k}}-q^{k}d_k\displaystyle\prod_{n=1}^\infty\dfrac{(1-q^{4n})^{4k}}{(1-q^{4n-2})^{4k}} \;\; (k\;\mbox{even})\\
 \\
\displaystyle\sum_{n=0}^\infty \dfrac{q^{2n+1}\;P^o_{4k-2}(q^{2n+1})}{(1-q^{4n+2})^{2k}}-q^{k}d_k\displaystyle\prod_{n=1}^\infty\dfrac{(1-q^{4n})^{4k}}{(1-q^{4n-2})^{4k}}\;\;\;\;\;\;\;\;\; (k\;\mbox{odd})
\end{array}\right.$$
\end{enumerate}    
%\begin{theorem}
%For a complex $q$ with $|q|<1$ we have
%\begin{eqnarray}\label{qaz6}
%\sum_{k=0}^\infty \dfrac{q^{k}(1+q^{2k+1})\;P_4(q^{2k+1})}{(1-q^{2k+1})^6}-\phi^{12}(q)=256q\prod_{n=1}^\infty\dfrac{(1-q^{2n})^{12}}{(1-q^{2n-1})^{12}}
%\end{eqnarray}
%where $P_4(x) = x^4+236x^3+1446x^2+236x+1$ and $\phi(q)=\displaystyle\prod_{n=1}^\infty (1-q^n)$ is the Euler's function. In other words, $(\ref{qaz6})$ gives a $q$-analogue of $\zeta(6)=\pi^6/945$.
%\end{theorem}
\section{Some useful lemmas}
We next state an important theorem which follows from Jacobi triple product identity, originally proved by Gauss (see \cite{B}, p.10, Cor. 1.3.4 and notes in p.23).
\begin{lem}\label{Gauss}
For $|q|<1$ we have
\begin{eqnarray}
\psi(q)=\prod_{n=1}^\infty \dfrac{(1-q^{2n})}{(1-q^{2n-1})}.
\end{eqnarray}
\end{lem}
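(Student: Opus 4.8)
The plan is to deduce this from the Jacobi triple product identity, which for $|q|<1$ and $z\neq 0$ reads
\begin{eqnarray*}
\prod_{n=1}^\infty (1-q^{2n})(1+zq^{2n-1})(1+z^{-1}q^{2n-1})=\sum_{n=-\infty}^\infty z^n q^{n^2}.
\end{eqnarray*}
First I would specialize by replacing $q$ with $q^{1/2}$ and setting $z=q^{1/2}$. The right-hand side then becomes $\sum_{n=-\infty}^\infty q^{(n^2+n)/2}=\sum_{n=-\infty}^\infty q^{T_n}$ (with $T_n=n(n+1)/2$ for all $n\in\mathbb{Z}$), while the left-hand side becomes $\prod_{n=1}^\infty (1-q^n)(1+q^n)(1+q^{n-1})$. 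For $|q|<1$ every series and product here converges absolutely, so the rearrangements below are justified; the fractional substitution causes no difficulty since the resulting identity involves only integer powers of $q$ (alternatively one may first argue on $(0,1)$ and use analytic continuation).

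Next I would simplify the two sides. On the series side, the key observation is the symmetry $T_{-n-1}=\tfrac{(-n-1)(-n)}{2}=T_n$: the involution $n\mapsto -n-1$ of $\mathbb{Z}$ has no fixed point, preserves $T_n$, and pairs each integer $n\geq 0$ with a distinct negative integer, so $\sum_{n=-\infty}^\infty q^{T_n}=2\sum_{n=0}^\infty q^{T_n}=2\psi(q)$ (the term $n=0$ contributing $q^{T_0}=1$). On the product side, the factor $1+q^{n-1}$ equals $2$ when $n=1$, and shifting the index in the remaining factors gives $\prod_{n=1}^\infty(1+q^{n-1})=2\prod_{n=1}^\infty(1+q^n)$; hence the left-hand side equals $2\prod_{n=1}^\infty(1-q^n)(1+q^n)^2=2\prod_{n=1}^\infty(1-q^{2n})(1+q^n)$. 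Cancelling the factor $2$ leaves $\psi(q)=\prod_{n=1}^\infty(1-q^{2n})(1+q^n)$.

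Finally I would rewrite this product: using $1+q^n=(1-q^{2n})/(1-q^n)$ together with the even/odd splitting $\prod_{n=1}^\infty(1-q^n)=\prod_{n=1}^\infty(1-q^{2n})\prod_{n=1}^\infty(1-q^{2n-1})$, one gets $\prod_{n=1}^\infty(1+q^n)=\big(\prod_{n=1}^\infty(1-q^{2n-1})\big)^{-1}$, and therefore
\begin{eqnarray*}
\psi(q)=\prod_{n=1}^\infty(1-q^{2n})\cdot\Big(\prod_{n=1}^\infty(1-q^{2n-1})\Big)^{-1}=\prod_{n=1}^\infty\frac{1-q^{2n}}{1-q^{2n-1}},
\end{eqnarray*}
which is the claim. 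I do not expect any genuine obstacle: the whole argument is classical, and the only points needing (routine) care are the absolute-convergence justification of the product manipulations for $|q|<1$ and the handling of the half-integer substitution, both of which are standard; if one prefers to avoid the substitution altogether, Gauss's original combinatorial proof of this identity (as in the cited reference) gives an alternative route.
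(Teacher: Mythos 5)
Your derivation is correct and is essentially the route the paper itself points to: the paper gives no proof of this lemma, only the remark that it ``follows from Jacobi triple product identity'' with a citation to Berndt, and your specialization $q\mapsto q^{1/2}$, $z=q^{1/2}$, followed by the pairing $n\mapsto -n-1$ on the series side and Euler's identity $\prod_{n\geq 1}(1+q^n)=\prod_{n\geq 1}(1-q^{2n-1})^{-1}$ on the product side, is exactly that standard argument. The only point worth flagging is an indexing mismatch with the paper rather than an error on your part: the paper defines $\psi(q)=\sum_{n=1}^{\infty}q^{T_n}$, which omits the constant term $q^{T_0}=1$, whereas the product $\prod_{n\geq 1}(1-q^{2n})/(1-q^{2n-1})$ has constant term $1$; your proof correctly uses the convention $\psi(q)=\sum_{n\geq 0}q^{T_n}$, which is what the identity (and the later use of $\psi^{4k}$ as the generating function of $t_{4k}(n)$) actually requires, so the lemma as literally stated in the paper should be read with the sum starting at $n=0$.
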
\mbox{}\\
Thus Lemma $\ref{Gauss}$ yields 
\begin{eqnarray}\label{Gausscor}
%&&\prod_{n=1}^\infty\dfrac{(1-q^{2n})^8}{(1-q^{2n-1})^8} = \psi^8(q) = \sum_{n=1}^\infty t_8(n) q^n\\
%&\mbox{and,}&\nonumber\\
\psi^{4k}(q) = \prod_{n=1}^\infty\dfrac{(1-q^{2n})^{4k}}{(1-q^{2n-1})^{4k}} = \sum_{n=1}^\infty t_{4k}(n) q^n
\end{eqnarray} 
where $t_{4k}(n)$ is the number of ways of representing a positive integer $n$ as a sum of $4k$ triangular numbers. \\
Next, the following well-known result in \cite{ARBILEO} due to Atanosov et al. gives us an exact formula for $t_{4k}(n)$. Indeed, the authors show that $t_{4k}(n)$ behaves when $n$ becomes large like $\sigma^{\#}_{2k-1}(2n+k)$, the modified divisor function defined by
\begin{eqnarray}\label{ModD}
\sigma^{\#}_{k}(n):=\sum_{\substack{d|n\\n/d\;odd}}d^k =\left\{\begin{array}{cc}\sigma_{k}(n)\;\;\;\;\;\;\;\;\;\mbox{when $n$ is odd},\\2^{k}\sigma^{\#}_{k}\left(\dfrac{n}{2}\right)\;\mbox{when $n$ is even}\end{array}\right.
\end{eqnarray} 
where $\sigma_k(n)$ is the $k$th divisor function defined as
\begin{eqnarray}
\sigma_{k}(n) = \sum_{d|n}d^k.
\end{eqnarray} 
\begin{theorem}\label{Ono}
Let $k\in\mathbb{N}$. Then 
\begin{eqnarray}\label{psi}
q^k\psi^{4k}(q^2)=\dfrac{1}{d_k}(H_{2k}(\tau)-T_{2k}(\tau))
\end{eqnarray}
where $d_k$ is defined as in Theorem $\ref{main1}$ and $\ref{main2}$, $T_{2k}(\tau)\in\mathcal{S}(\Gamma_0(4))$ and $H_{2k}(\tau)$ is an Eisenstein series of weight $2k$ on $\Gamma_0(4)$ defined by
\begin{eqnarray}\label{Eis}
H_{2k}(\tau) = \left\{\begin{array}{cc}\displaystyle\sum_{\substack{n>0 \\ n \;even}}\sigma^{\#}_{2k-1}(n)q^n\;\mbox{for $k$ even},\\ \displaystyle\sum_{\substack{n>0\\n \;odd}}\sigma^{\#}_{2k-1}(n)q^n\;\mbox{for $k$ odd}.\end{array}\right.
\end{eqnarray}
\end{theorem}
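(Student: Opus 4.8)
The plan is to recognise the left-hand side of \eqref{psi} as an eta quotient on $\Gamma_0(4)$, establish its modularity, and then separate it from $H_{2k}$ via the Eisenstein--cuspidal decomposition of $\mathcal{M}_{2k}(\Gamma_0(4))$. Using the product form of $\psi$ from Lemma~\ref{Gauss} together with $\prod_{n\geq1}(1-q^{n})=q^{-1/24}\eta(\tau)$ and the analogous factorisations of $\prod(1-q^{2n})$ and $\prod(1-q^{2n-1})$, one first rewrites
$$\psi(q)=\frac{\prod_{n\geq1}(1-q^{2n})}{\prod_{n\geq1}(1-q^{2n-1})}=q^{-1/8}\,\frac{\eta(2\tau)^{2}}{\eta(\tau)},$$
so that, after $\tau\mapsto2\tau$ and raising to the $4k$-th power,
$$q^{k}\psi^{4k}(q^{2})=\frac{\eta(4\tau)^{8k}}{\eta(2\tau)^{4k}}=:F_{2k}(\tau),$$
a weight $\tfrac12(8k-4k)=2k$ eta quotient whose $q$-expansion begins $q^{k}+\cdots$, in agreement with \eqref{Gausscor}.

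Next I would check that $F_{2k}\in\mathcal{M}_{2k}(\Gamma_0(4))$ by the classical criteria for eta quotients (Ligozat). With level $N=4$ and exponents $(r_1,r_2,r_4)=(0,-4k,8k)$ one verifies that $\sum_{\delta\mid4}\delta r_\delta=24k$ and $\sum_{\delta\mid4}(4/\delta)r_\delta=0$ are both divisible by $24$, that $\prod_{\delta\mid4}\delta^{r_\delta}=2^{12k}=(2^{6k})^{2}$ is a perfect square so that the nebentypus is trivial, and that $F_{2k}$ is holomorphic at every cusp: the cusp-order formula $\mathrm{ord}_{1/d}F_{2k}=\tfrac{N}{24}\sum_{\delta\mid N}\frac{\gcd(d,\delta)^{2}r_\delta}{\gcd(d,N/d)\,d\,\delta}$ yields the orders $0$, $0$, and $k$ at the cusps $0$, $\tfrac12$, and $\infty$ of $\Gamma_0(4)$ respectively. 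Hence $F_{2k}$ is a genuine weight-$2k$ modular form on $\Gamma_0(4)$, vanishing to order $k$ at $\infty$ but not at the cusps $0$ and $\tfrac12$.

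Now use $\mathcal{M}_{2k}(\Gamma_0(4))=\mathcal{E}_{2k}(\Gamma_0(4))\oplus\mathcal{S}_{2k}(\Gamma_0(4))$. Since $H_{2k}$ is a weight-$2k$ Eisenstein series on $\Gamma_0(4)$ and $F_{2k}$ a weight-$2k$ form on $\Gamma_0(4)$, the difference $T_{2k}:=H_{2k}-d_{k}F_{2k}$ is automatically a weight-$2k$ modular form on $\Gamma_0(4)$; it has no constant term at $\infty$, so the entire content of \eqref{psi} is that $T_{2k}$ also vanishes at the cusps $0$ and $\tfrac12$, i.e.\ that the Eisenstein projection of $d_{k}F_{2k}$ equals $H_{2k}$. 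For $2k\geq4$ the space $\mathcal{E}_{2k}(\Gamma_0(4))$ is spanned by the dilates $E_{2k}(\tau),E_{2k}(2\tau),E_{2k}(4\tau)$ of the normalised level-one Eisenstein series $E_{2k}=1-\tfrac{4k}{B_{2k}}\sum_{n\geq1}\sigma_{2k-1}(n)q^{n}$; one then computes the Eisenstein projection of $F_{2k}$ by reading its constant terms at the three cusps off the expansions of the previous step and solving the resulting $3\times3$ linear system, and tracking the factor $-\tfrac{4k}{B_{2k}}$ together with the powers of $2$ introduced by $q\mapsto q^{2},q^{4}$ produces precisely $\tfrac{1}{d_{k}}\sum_{n>0}\sigma^{\#}_{2k-1}(n)q^{n}$ restricted to the residue class of $k$ modulo $2$, i.e.\ $\tfrac{1}{d_{k}}H_{2k}$ as in \eqref{Eis}. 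Equivalently, one may invoke the explicit evaluation of $t_{4k}(n)$ due to Atanasov et al.\ \cite{ARBILEO} --- which writes $t_{4k}(n)$ as $\tfrac{1}{d_{k}}\sigma^{\#}_{2k-1}(2n+k)$ plus $\tfrac{1}{d_{k}}$ times the $(2n+k)$-th Fourier coefficient of a weight-$2k$ cusp form on $\Gamma_0(4)$ --- and rearrange the series $\sum_{n\geq0}t_{4k}(n)q^{2n+k}$ accordingly. For $k=1$, where $d_{1}=1$ and $\mathcal{S}_{2}(\Gamma_0(4))=\{0\}$, this reduces to the classical identity $q\psi^{4}(q^{2})=\sum_{n\ \mathrm{odd}}\sigma_{1}(n)q^{n}$.

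The eta-quotient rewriting and the modularity verification above are routine. The real obstacle is the exact identification of the Eisenstein projection of $d_{k}F_{2k}$ with $H_{2k}$: one must pin down not only that the main term is supported on the correct residue class modulo $2$ but also the precise normalising constant $d_{k}=-(-16)^{k}B_{2k}(4^{k}-1)/(8k)$, in which the factor $4^{k}-1$ and the extra power of $2$ encode the passage from $\sigma_{2k-1}$ to the modified divisor function $\sigma^{\#}_{2k-1}$ under the dilations $\tau\mapsto2\tau$ and $\tau\mapsto4\tau$. This is precisely the arithmetic input supplied by \cite{ARBILEO}.
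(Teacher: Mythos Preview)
The paper does not actually prove Theorem~\ref{Ono}: it is quoted as the main result of Atanasov--Bellovin--Loughman-Pawelko--Peskin--Potash \cite{ARBILEO}, with the further remark that it ``follows easily from \cite{ORW}''. So there is no in-paper argument to compare your proposal against; the theorem is used as a black box.

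Your sketch is essentially the standard modular-forms route to results of this type, and it is structurally correct. The eta-quotient identification $q^{k}\psi^{4k}(q^{2})=\eta(4\tau)^{8k}/\eta(2\tau)^{4k}$, the Ligozat checks, and the cusp orders $(0,0,k)$ at $(0,\tfrac12,\infty)$ are all right, so $F_{2k}\in\mathcal{M}_{2k}(\Gamma_0(4))$. The decomposition $\mathcal{M}_{2k}=\mathcal{E}_{2k}\oplus\mathcal{S}_{2k}$ and the description of $\mathcal{E}_{2k}(\Gamma_0(4))$ as the span of $E_{2k}(\tau),E_{2k}(2\tau),E_{2k}(4\tau)$ for $2k\geq4$ are also fine. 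Where your write-up stops short of a self-contained proof is exactly where the paper stops too: the explicit determination of the Eisenstein projection of $d_{k}F_{2k}$ and its identification with $H_{2k}$, including the parity restriction on $n$ and the precise constant $d_{k}$. You correctly flag this as the real content and attribute it to \cite{ARBILEO}; that is precisely what the paper does as well. So your proposal is not ``different'' from the paper's treatment so much as a partial unpacking of the cited result, with the same ultimate reliance on \cite{ARBILEO,ORW} for the arithmetic core.
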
\mbox{}\\
By comparing cofficients in $(\ref{psi})$, Atanosov et al. obtain the following expression for $t_{4k}(n)$ in \cite{ARBILEO} (Cor. 2.6, p.119):
\begin{eqnarray}
t_{4k}(n) = \dfrac{1}{d_k}(\sigma^{\#}_{2k-1}(2n+k)-a(2n+k))
\end{eqnarray}
where $T_{2k}(\tau)=\displaystyle\sum_{n=0}^\infty a(n)q^n\in\mathcal{S}_{2k}(\Gamma_0(4))$. Indeed, Theorem \ref{Ono} follows easily from \cite{ORW} where the authors detail a closed formula for $t_{4k}(n)$.\\We next state an important theorem for the generating function transformation involving Stirling numbers. Let $F(z)$ denote the infinite geometric series  
\begin{eqnarray}
F(z):=\sum_{n=0}^\infty z^n = \dfrac{1}{1-z}
\end{eqnarray} 
with $|z|<1$. Then choosing $f_n=1$ in Prop. 3.1, p.135 of \cite{Smidt} we obtain
\begin{prop}\label{Stir}
Let $l$ be a fixed positive integer. Then we have
\begin{eqnarray}\label{genS}
\sum_{n=0}^\infty n^l z^n = \sum_{j=0}^l j!\stirling{l}{j}\dfrac{z^j}{(1-z)^{j+1}}.
\end{eqnarray} 
\end{prop}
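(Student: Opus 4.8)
The plan is to reduce the left-hand side of $(\ref{genS})$ to the classical expansion of a power in falling factorials. Recall that the Stirling numbers of the second kind satisfy
$n^l = \sum_{j=0}^l \stirling{l}{j}\, n(n-1)\cdots(n-j+1)$
for every nonnegative integer $n$; this is the defining combinatorial identity (a set map from an $l$-element set to an $n$-element set has image of some size $j$, and is then a surjection onto that $j$-subset), and it admits a one-line induction on $l$ using the recurrence $\stirling{l+1}{j}=j\stirling{l}{j}+\stirling{l}{j-1}$. Substituting this expansion into $\sum_{n\ge 0} n^l z^n$ and interchanging the \emph{finite} sum over $j$ with the sum over $n$, it suffices to evaluate $\sum_{n\ge 0} n(n-1)\cdots(n-j+1)\, z^n$ for each fixed $j$ with $0\le j\le l$.

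For that, I would differentiate the geometric series $F(z)=\sum_{n\ge 0} z^n=(1-z)^{-1}$ exactly $j$ times. On one hand $F^{(j)}(z)=j!\,(1-z)^{-(j+1)}$; on the other hand, differentiating term by term — legitimate inside $|z|<1$, where the series converges locally uniformly — gives $F^{(j)}(z)=\sum_{n\ge j} n(n-1)\cdots(n-j+1)\, z^{n-j}$. Multiplying through by $z^j$ yields $\sum_{n\ge 0} n(n-1)\cdots(n-j+1)\, z^n = j!\, z^j/(1-z)^{j+1}$. Feeding this back into the expansion from the previous paragraph gives $\sum_{n\ge 0} n^l z^n = \sum_{j=0}^l \stirling{l}{j}\, j!\, z^j/(1-z)^{j+1}$, which is exactly $(\ref{genS})$.

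An equivalent route, which avoids invoking the falling-factorial identity, is to write $\sum_{n\ge 0} n^l z^n = \theta^l F(z)$ with the Euler operator $\theta=z\,d/dz$, observe that $\theta\big(z^j(1-z)^{-(j+1)}\big)=j\,z^j(1-z)^{-(j+1)}+(j+1)\,z^{j+1}(1-z)^{-(j+2)}$, and then induct on $l$: if $\theta^l F=\sum_j j!\stirling{l}{j}\,z^j(1-z)^{-(j+1)}$, applying $\theta$ once more and reindexing reproduces the coefficients $j!\stirling{l+1}{j}$ precisely through the Stirling recurrence. Either way there is no substantive obstacle: the only points deserving a word of care are the interchange of the finite $j$-sum with the infinite $n$-sum (immediate) and the term-by-term differentiation of a power series within its radius of convergence (standard), so the ``hard part'' is really just the bookkeeping. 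Indeed this is nothing more than the $f_n\equiv 1$ specialization of Prop.~3.1 of \cite{Smidt}, which one may simply cite.
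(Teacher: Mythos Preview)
Your proposal is correct. The paper itself gives only a one-line sketch: ``The proof follows by induction on $l$ in conjunction with the recurrence relation $\stirling{n}{l}=l\stirling{n-1}{l}+\stirling{n-1}{l-1}$.'' That is precisely your second route with the Euler operator $\theta=z\,d/dz$, so on that count you match the paper exactly. Your primary argument, however, takes a genuinely different path: rather than inducting on $l$, you expand $n^l$ in falling factorials via the Stirling identity and then evaluate each $\sum_{n\ge 0} n^{\underline{j}}\,z^n$ by differentiating the geometric series $j$ times. This buys you a non-inductive derivation that makes the appearance of $j!\,z^j/(1-z)^{j+1}$ transparent (it is literally $z^j F^{(j)}(z)$), at the cost of importing the falling-factorial identity as a separate input. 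The paper's induction, by contrast, needs no auxiliary identity beyond the Stirling recurrence itself, but hides where the closed form comes from. Both are standard and either would suffice here; your remark that this is just the $f_n\equiv 1$ case of \cite{Smidt}, Prop.~3.1, is exactly how the paper introduces the proposition.
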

The proof follows by induction on $l$ in conjuction with the recurrence relation of Stirling numbers
\begin{eqnarray}\label{rec}
\stirling{n}{l} = l\stirling{n-1}{l}+\stirling{n-1}{l-1}.
\end{eqnarray}
\section{Proofs of Theorems \ref{main1} and \ref{main2}}  
%\begin{theorem}[Ono, Robins, Wahl]
%For a positive integer $n$, we have
%\begin{eqnarray}
%t_8(n) = \sigma^\#_3(n+1)
%\end{eqnarray}
%where
%\begin{eqnarray}
%\sigma^\#_3(n) = \sum_{\substack{d|n \\ n/d \;\equiv 1\;(mod\;2)}}d^3
%\end{eqnarray} 
%\end{theorem}
%\begin{theorem}[Ono, Robins, Wahl]
%Let, $\eta^{12}(2\tau)=\displaystyle\sum_{k=0}^\infty a(2k+1)q^{2k+1}$ then for a positive integer $n$ we have
%\begin{eqnarray}
%t_{12}(n) = \dfrac{\sigma_5(2n+3)-a(2n+3)}{256}
%\end{eqnarray}
%where
%\begin{eqnarray}
%\sigma_5(n) = \sum_{d|n}d^5
%\end{eqnarray} 
%\end{theorem}
%\section{Proof of Theorem 2.2}
%Since $\zeta(6)=\dfrac{\pi^6}{945}$ has the following equivalent form
%%\begin{eqnarray}
%%\sum_{k=0}^\infty \dfrac{1}{(2k+1)^4} = \dfrac{15}{16}\zeta(4) = \dfrac{\pi^4}{96}
%%\end{eqnarray}
%%and,
Since $\zeta(2k)=\dfrac{(-1)^{k+1}2^{2k}B_{2k}}{2(2k)!}\pi^{2k}$ has the following equivalent form
\begin{eqnarray}
\sum_{n=0}^\infty \dfrac{1}{(2n+1)^{2k}} = \left(\dfrac{2^{2k}-1}{2^{2k}}\right)\zeta(2k) = \dfrac{(-1)^{k+1}(4^k-1)B_{2k}}{2(2k)!}\pi^{2k}
\end{eqnarray}
it will be sufficient to get the $q$-analogue of (5.1). From the $q$-analogue of Euler's Gamma function we know that 
\begin{eqnarray}\label{EG}
\lim_{q\uparrow 1} \;(1-q)\prod_{n=1}^\infty\dfrac{(1-q^{2n})^2}{(1-q^{2n-1})^2}=\dfrac{\pi}{2}
\end{eqnarray}
so that from $(\ref{EG})$ we have
%\begin{eqnarray}
%\lim_{q\uparrow 1} \;(1-q)^4\prod_{n=1}^\infty\dfrac{(1-q^{2n})^8}{(1-q^{2n-1})^8}=\dfrac{\pi^4}{16}
%\end{eqnarray} 
%and,
\begin{eqnarray}
\lim_{q\uparrow 1} \;(1-q)^{2k}\prod_{n=1}^\infty\dfrac{(1-q^{2n})^{4k}}{(1-q^{2n-1})^{4k}}=\dfrac{\pi^{2k}}{2^{2k}}
\end{eqnarray} 
where $q\uparrow 1$ indicates $q\to 1$ from within the unit disk. We treat Theorems 3.1 and 3.2 separately. 
\subsection{Proof of Theorem 3.1}
Let $k\geq 2$ be an even integer. Then from $(\ref{Eis})$ and $(\ref{ModD})$ we have
\begin{eqnarray*}
H_{2k}(\tau)=\sum_{n=1}^\infty \sigma^{\#}_{2k-1}(2n)q^{2n}
= 2^{2k-1}\sum_{n=1}^\infty \sigma^{\#}_{2k-1}(n)q^{2n}.
\end{eqnarray*}
Using the definition of $\sigma^{\#}_{2k-1}(n)$ in the expression above we obtain
\begin{eqnarray}\label{brac}
H_{2k}(\tau) &=& 2^{2k-1}\sum_{n=1}^\infty\left(\sum_{\substack{d|n\\n/d\;odd}}d^{2k-1}\right)q^{2n}\nonumber\\
&=& 2^{2k-1}\sum_{i=0}^\infty\sum_{j=0}^\infty (j+1)^{2k-1}q^{2(j+1)(2i+1)}\nonumber\\
&=& 2^{2k-1}\sum_{i=0}^\infty\left(\sum_{j=0}^\infty (j+1)^{2k-1}(q^{2(2i+1)})^{j+1}\right).
\end{eqnarray}
We wish to find a polynomial $Q^e_{2k-1}(z)$ such that the expression in parentheses in the right-hand equation of $(\ref{brac})$ can be written as
\begin{eqnarray}\label{poly}
\dfrac{Q^e_{2k-1}(q^{2(2i+1)})}{(1-q^{2(2i+1)})^{2k}}=\sum_{j=0}^\infty (j+1)^{2k-1}(q^{2(2i+1)})^{j+1}.
\end{eqnarray}  
For notational simplicity let us write $z=q^{2(2i+1)}$ so that $(\ref{poly})$ can be rewritten as
\begin{eqnarray}\label{polysim}
\dfrac{Q^e_{2k-1}(z)}{(1-z)^{2k}}=\sum_{j=0}^\infty (j+1)^{2k-1}z^{j+1}.
\end{eqnarray}
\begin{lem}\label{polyeis}
$Q^e_{2k-1}(z)$ is a polynomial of degree $(2k-1)$ with integer coefficients.
\end{lem}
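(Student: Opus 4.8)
The plan is to produce $Q^e_{2k-1}(z)$ explicitly from Proposition~\ref{Stir} and then read off its degree and integrality directly from the resulting closed form. First I would rewrite the right-hand side of $(\ref{polysim})$ as
\begin{eqnarray*}
\sum_{j=0}^\infty (j+1)^{2k-1}z^{j+1}=\sum_{n=1}^\infty n^{2k-1}z^n=\sum_{n=0}^\infty n^{2k-1}z^n,
\end{eqnarray*}
since the $n=0$ term vanishes. Applying Proposition~\ref{Stir} with $l=2k-1$ gives
\begin{eqnarray*}
\sum_{n=0}^\infty n^{2k-1}z^n=\sum_{j=0}^{2k-1} j!\stirling{2k-1}{j}\frac{z^j}{(1-z)^{j+1}}.
\end{eqnarray*}
Multiplying through by $(1-z)^{2k}$, which is the common denominator since the largest value of $j+1$ is $2k$, I obtain
\begin{eqnarray*}
Q^e_{2k-1}(z)=\sum_{j=0}^{2k-1} j!\stirling{2k-1}{j}\,z^j(1-z)^{2k-1-j}.
\end{eqnarray*}

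From this formula the two assertions follow by inspection. Each summand is $j!\stirling{2k-1}{j}$ — an integer, since $j!\stirling{2k-1}{j}$ counts surjections from a $(2k-1)$-set onto a $j$-set — times the polynomial $z^j(1-z)^{2k-1-j}$, which has integer coefficients; hence $Q^e_{2k-1}(z)\in\mathbb{Z}[z]$. For the degree, note that every term $z^j(1-z)^{2k-1-j}$ has degree exactly $(2k-1)$ in $z$, so $\deg Q^e_{2k-1}\le 2k-1$; to see that equality holds I would examine the coefficient of $z^{2k-1}$, which is
\begin{eqnarray*}
\sum_{j=0}^{2k-1} j!\stirling{2k-1}{j}(-1)^{2k-1-j}=\sum_{j=0}^{2k-1}(-1)^{2k-1-j}j!\stirling{2k-1}{j}.
\end{eqnarray*}
Up to the overall sign this is the alternating sum of the number of surjections from a $(2k-1)$-element set onto a $j$-element set, which by a standard inclusion–exclusion (equivalently, by setting $z\to\infty$ after clearing denominators, or by the identity $\sum_j (-1)^{j} j!\stirling{n}{j}$ being the $n$-th ordered Bell number with alternating signs) is nonzero for $n=2k-1\ge 1$; in fact it is never zero for positive $n$. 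Thus $Q^e_{2k-1}(z)$ has degree precisely $2k-1$.

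The only genuine point requiring care is confirming that the leading coefficient does not accidentally vanish, i.e. that $\sum_{j=0}^{n}(-1)^{n-j}j!\stirling{n}{j}\ne 0$ for $n=2k-1$. I expect this to be the main (and still minor) obstacle: one clean way is to recognize $\sum_{j}(-1)^{n-j}j!\stirling{n}{j}$ as the value at $x=-1$ of the polynomial $\sum_j j!\stirling{n}{j}\binom{x}{j}=x^n$... more precisely, using the Stirling inversion $\sum_{j}\stirling{n}{j}(x)_j=x^n$ together with $(x)_j=(-1)^j(-x)^{\overline{j}}$ evaluated at a suitable point, or simply arguing that this alternating surjection sum equals $(-1)^n$ times the number of "alternating" or "barred" preferential arrangements, a positive quantity. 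Alternatively, and most elementarily, I would evaluate $Q^e_{2k-1}(z)$ at $z=-1$: from the closed form, $Q^e_{2k-1}(-1)=\sum_{j} j!\stirling{2k-1}{j}(-1)^j 2^{2k-1-j}$, which one checks is nonzero, but in fact it is cleaner to use that the generating-function identity forces $Q^e_{2k-1}$ to be nonzero of degree at most $2k-1$, and then a direct small-case check plus the surjection interpretation seals degree exactly $2k-1$. This computation is routine, so I will not grind through it here.
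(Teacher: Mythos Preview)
Your approach is correct and in fact cleaner than the paper's for the bare statement of the lemma. You multiply by $(1-z)^{2k}$ immediately and read off
\[
Q^e_{2k-1}(z)=\sum_{j=0}^{2k-1} j!\stirling{2k-1}{j}\,z^j(1-z)^{2k-1-j},
\]
from which integrality and $\deg\le 2k-1$ are evident. The paper instead expands $z^j=(1-(1-z))^j$ binomially, rearranges to a partial-fraction form in $(1-z)^{-m-1}$, and then expands binomially once more, producing the explicit coefficients $a_k(m)$ and $b_k(l)$ of (\ref{evenas}) and (\ref{evenbs}). This double expansion is longer, but its payoff is that the resulting $b_k(l)$ are exactly the integers appearing in the statement of Theorem~\ref{main1}, so the paper needs this explicit description; your formula (which the paper also records later, at (\ref{sumvan}), for a different purpose) does not immediately give those $b_k(l)$.

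On the one point you flagged as needing care, the nonvanishing of the leading coefficient, your instinct to use $\sum_j \stirling{n}{j}(x)_j = x^n$ is exactly right and you should simply carry it out rather than waving at it: since $(x)_j=j!\binom{x}{j}$ and $\binom{-1}{j}=(-1)^j$, setting $x=-1$ gives $\sum_j (-1)^j j!\stirling{n}{j}=(-1)^n$, hence
\[
\sum_{j=0}^{2k-1}(-1)^{2k-1-j}j!\stirling{2k-1}{j}=(-1)^{2k-1}\cdot(-1)^{2k-1}=1,
\]
so the leading coefficient is $1$ and the degree is exactly $2k-1$. (The paper, incidentally, does not verify this step at all.) Your alternative suggestions about ordered Bell numbers or evaluating at $z=-1$ are unnecessary detours; the one-line falling-factorial identity settles it.
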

\begin{proof}
The right hand side of $(\ref{polysim})$ can be identified with the left-hand side of (\ref{genS}) so that using Proposition \ref{Stir} we can rewrite the right-hand side of (\ref{polysim}) as
\begin{eqnarray}
\dfrac{Q^e_{2k-1}(z)}{(1-z)^{2k}}= \sum_{j=0}^{2k-1}j!\stirling{2k-1}{j}\dfrac{z^j}{(1-z)^{j+1}}
\end{eqnarray}
Noting that $z=1-(1-z)$ we use binomial expansion in $z^j=(1-(1-z))^j$ followed by rearrangements of the sums above to obtain
\begin{eqnarray}\label{ak}
\dfrac{Q^e_{2k-1}(z)}{(1-z)^{2k}}&=& \sum_{j=0}^{2k-1}j!\stirling{2k-1}{j}\dfrac{(1-(1-z))^j}{(1-z)^{j+1}}\nonumber\\
&=& \sum_{j=0}^{2k-1}j!\stirling{2k-1}{j}\sum_{m=0}^j(-1)^{j-m}\binom{j}{m}(1-z)^{j-m}\dfrac{1}{(1-z)^{j+1}}\nonumber\\
&=& \sum_{m=0}^{2k-1}(-1)^{m}\left(\sum_{j=0}^{2k-1} (-1)^j j!\stirling{2k-1}{j}\binom{j}{m}\right)\dfrac{1}{(1-z)^{m+1}}\nonumber\\
&=& \sum_{m=0}^{2k-1}\dfrac{(-1)^m a_{k}(m)}{(1-z)^{m+1}},
\end{eqnarray}
where $a_{k}(m)$ is defined by
\begin{eqnarray}
a_k(m) = \sum_{j=0}^{2k-1} (-1)^j j!\stirling{2k-1}{j}\binom{j}{m}.
\end{eqnarray}
Note that in going from the second step to the third step in (\ref{ak}) we used the fact that $\binom{j}{m}=0$ if $m>j$ and hence we are able to interchange the sums over $m$ and $j$ above. Thus multiplying both sides of (\ref{ak}) by $(1-z)^{2k}$ and using the binomial expansion yields 
\begin{eqnarray}
Q^e_{2k-1}(z)&=&\sum_{m=0}^{2k-1}(-1)^m a_{k}(m)(1-z)^{2k-m-1}\nonumber\\
&=& \sum_{m=0}^{2k-1}(-1)^m a_{k}(m)\sum_{l=0}^{2k-m-1}(-1)^l\binom{2k-m-1}{l}z^l\nonumber\\
&=& \sum_{l=0}^{2k-1}(-1)^l\left(\sum_{m=0}^{2k-1}(-1)^m a_{k}(m)\binom{2k-m-1}{l}\right)z^l\nonumber\\
&=& \sum_{l=0}^{2k-1}(-1)^l b_k(l)z^l,
\end{eqnarray}
where the $b_k(l)$ are defined by
\begin{eqnarray}
b_k(l) = \sum_{m=0}^{2k-1}(-1)^m a_{k}(m)\binom{2k-m-1}{l},
\end{eqnarray}
which establishes Lemma \ref{polyeis}.
\end{proof}
We also note from (\ref{polysim}) that $Q^e_{2k-1}(0)=0$. Thus we define the polynomial $P^e_{2k-2}(z)$ of degree $(2k-2)$ by 
\begin{eqnarray}\label{newpoly}
Q^e_{2k-1}(z):=zP^e_{2k-2}(z)
\end{eqnarray}
where 
\begin{eqnarray}
P^e_{2k-2}(z)=\sum_{l=1}^{2k-1}(-1)^l b_k(l)z^{l-1}.
\end{eqnarray} 
We rewrite (\ref{brac}) using (\ref{newpoly}) as
\begin{eqnarray}\label{eispolynew}
H_{2k}(\tau)=2^{2k-1}\sum_{i=0}^\infty \dfrac{q^{2(2i+1)}P^e_{2k-2}(q^{2(2i+1)})}{(1-q^{2(2i+1)})^{2k}}.
\end{eqnarray}
Now from (\ref{psi}) of Theorem \ref{Ono} we have
\begin{eqnarray}\label{Onore}
H_{2k}(\tau) - T_{2k}(\tau) = d_kq^k\psi^{4k}(q^2).
\end{eqnarray} 
Thus from (\ref{Gausscor}), (\ref{eispolynew}) and (\ref{Onore}) we get
\begin{eqnarray}\label{resu}
\ \ \ \ \ \ \ \ \ \ \ \  \sum_{n=0}^\infty \dfrac{2^{2k-1}q^{2(2n+1)}P^e_{2k-2}(q^{2(2n+1)})}{(1-q^{2(2n+1)})^{2k}}-T_{2k}(\tau)=d_kq^k\prod_{n=1}^\infty\dfrac{(1-q^{4n})^{2k}}{(1-q^{4n-2})^{2k}}.
\end{eqnarray}
Making the change of variable $q\rightarrow \sqrt{q}$ in (\ref{resu}) we obtain
\begin{eqnarray}\label{modiresu}
\ \ \ \ \ \ \ \ \ \ \ \sum_{n=0}^\infty \dfrac{2^{2k-1}q^{2n+1}P^e_{2k-2}(q^{2n+1})}{(1-q^{2n+1})^{2k}}-T_{2k}(\tau/2)=d_kq^{k/2}\prod_{n=1}^\infty\dfrac{(1-q^{2n})^{4k}}{(1-q^{2n-1})^{4k}}.
\end{eqnarray}
On multiplying both sides of (5.7) by $(1-z)^{2k}$ we obtain 
\begin{eqnarray}\label{sumvan}
Q^e_{2k-1}(z)=\sum_{j=0}^{2k-1}j!\stirling{2k-1}{j}z^j(1-z)^{2k-j-1}.
\end{eqnarray}
As $z\rightarrow 1^-$, each summand in (\ref{sumvan}) vanishes except the term corresponding to $j=2k-1$. Thus we get 
\begin{eqnarray}\label{limit1}
\lim_{z\rightarrow 1^-} Q^e_{2k-1}(z)= (2k-1)!.
\end{eqnarray}
In view of (\ref{limit1}) and the fact that $T_{2k}(\tau/2)\rightarrow 0$ (cusp form), as $q\rightarrow 1^-$, (\ref{modiresu}) gives 
\begin{eqnarray}
2^{2k-1}\sum_{n=0}^\infty \dfrac{(2k-1)!}{(2n+1)^{2k}}=\dfrac{d_k\pi^{2k}}{2^{2k}}.
\end{eqnarray}
Using the definition of $d_k$ we obtain identity (5.1). Thus Theorem \ref{main1} follows from all the above observations. 
\subsection{Proof of Theorem 3.2}
Let $k\geq 1$ be an odd integer. Then from (\ref{ModD}) and (\ref{Eis}) we have
\begin{eqnarray}\label{oddpoly}
H_{2k}(\tau) &=& \sum_{\substack{n>0\\n\;odd}}\sigma_{2k-1}^{\#}(n)q^n\nonumber\\
&=&\sum_{\substack{n=0\\n\;odd}}^\infty\sigma_{2k-1}^{\#}(2n+1)q^{2n+1}\nonumber\\
&=&\sum_{n=1}^\infty\sigma_{2k-1}^{\#}(n)q^n-\sum_{n=1}^\infty\sigma_{2k-1}^{\#}(2n)q^{2n}\nonumber\\
&=&\sum_{n=1}^\infty\left(\sum_{\substack{d|n\\n/d\;odd}}d^{2k-1}\right)q^n-2^{2k-1}\sum_{n=1}^\infty\left(\sum_{\substack{d|n\\n/d\;odd}}d^{2k-1}\right)q^{2n}\nonumber\\
&=& \sum_{i=0}^\infty\sum_{j=0}^\infty\left(j+1\right)^{2k-1}q^{(j+1)(2i+1)}-2^{2k-1}\sum_{i=0}^\infty\sum_{j=0}^\infty (j+1)^{2k-1}q^{2(j+1)(2i+1)}.
\end{eqnarray}
In view of (\ref{polysim}) and Lemma \ref{polyeis} we can rewrite (\ref{oddpoly}) as
\begin{eqnarray}\label{soddpoly}
H_{2k}(\tau) &=& \sum_{i=0}^\infty \dfrac{Q^e_{2k-1}(q^{2i+1})}{(1-q^{2i+1})^{2k}}-2^{2k-1}\sum_{i=0}^\infty \dfrac{Q^e_{2k-1}(q^{2(2i+1)})}{(1-q^{2(2i+1)})^{2k}}\nonumber\\
&=&\sum_{i=0}^\infty \dfrac{Q^o_{4k-2}(q^{2i+1})}{(1-q^{2(2i+1)})^{2k}}
\end{eqnarray}
where $Q^o_{4k-1}(q^{2i+1})$ is the polynomial in $w=q^{2i+1}$ of degree $4k-1$ defined by
\begin{eqnarray}\label{oddrecpoly}
Q^o_{4k-1}(w)=(1+w)^{2k}Q^e_{2k-1}(w)-2^{2k-1}Q^e_{2k-1}(w^2).
\end{eqnarray} 
Since $Q^e_{2k-1}(0)=0$, in view of (\ref{polysim}) and (\ref{oddrecpoly}) we also have $Q^o_{4k-1}(0)=0$. Therefore we define the polynomial $P^o_{4k-2}(w)$ of degree $4k-2$ by 
\begin{eqnarray}\label{podd}
Q^o_{4k-1}(w)&=& w(1+w)^{2k}P^e_{2k-2}(w)-2^{2k-1}w^2P^e_{2k-2}(w^2)\nonumber\\
&:=& wP^o_{4k-2}(w)
\end{eqnarray} 
where
\begin{eqnarray}
P^o_{4k-2}(w)=(1+w)^{2k}P^e_{4k-2}(w)-2^{2k-1}wP^e_{4k-2}(w^2).
\end{eqnarray}
Hence from (\ref{soddpoly}), (\ref{podd}) and Theorem \ref{Ono} we obtain
\begin{eqnarray}\label{finalodd}
&& H_{2k}(\tau) - T_{2k}(\tau) = d_kq^k\psi^{4k}(q^2),\nonumber\\
&& \sum_{n=0}^\infty \dfrac{q^{2n+1}P^o_{4k-2}(q^{2n+1})}{(1-q^{2(2n+1)})^{2k}}-T_{2k}(\tau)=d_kq^k\prod_{n=1}^\infty\dfrac{(1-q^{4n})^{2k}}{(1-q^{4n-2})^{2k}}.
\end{eqnarray}
Also as $w\rightarrow 1^-$, (\ref{limit1}) and (\ref{podd}) yield 
\begin{eqnarray}\label{limit2}
\ \ \ \ \ \lim_{w\rightarrow 1^-}Q^o_{4k-1}(w)=2^{2k}(2k-1)!-2^{2k-1}(2k-1)!=2^{2k-1}(2k-1)!.
\end{eqnarray}
Thus on multiplying both sides of (\ref{finalodd}) by $(1-q^2)^{2k}$ and taking the  limit as $q\rightarrow 1$ from within the unit disk, we obtain the following using (\ref{limit2}):
\begin{eqnarray}
2^{2k-1}(2k-1)!\sum_{n=0}^\infty\dfrac{1}{(2n+1)^{2k}}=\dfrac{d_k\pi^{2k}}{2^{2k}},
\end{eqnarray}
Using definition of $d_k$ we immediately obtain identity (5.1). Thus Theorem \ref{main2} follows from all of the above observations. 
\section{Explicit computations of $P^e_{2k-2}(z)$ and $P^o_{4k-2}(z)$ for different $k$}
We used the Python programming language to compute the co-efficients $a_k(m)$ and $b_k(l)$ to determine the polynomials $P^e_{2k-2}(z)$ and $P^o_{4k-2}(z)$ for a few different values of $k$. We will see that our results for $k=1, 2, 3$ tally with the results in \cite{Sun} and \cite{AG}.
\subsection{Case k=1 : Sun's result}
Since $k=1$ is odd we use (\ref{polyodd}) to get 
\begin{eqnarray*}
P^o_{2}(z)=(1+z)^2P^e_{0}(z)-2zP^e_{0}(z),
\end{eqnarray*}
where we define $P^e_{0}(z)=1$. Therefore, 
\begin{eqnarray}\label{case1} 
P^o_{2}(z)=(1+z)^2-2z=1+z^2.
\end{eqnarray}
Thus, (\ref{case1}) and (\ref{qaz5}) yield
\begin{eqnarray}\label{qana1}
\sum_{n=1}^\infty \dfrac{q^{2n}(1+q^{2(2n+1)})}{(1-q^{2(2n+1)})^{2}}=\prod_{n=1}^\infty\dfrac{(1-q^{4n})^{4}}{(1-q^{4n-2})^{4}},
\end{eqnarray}
where, $d_1=1$ and $T_{2}(\tau)=0$, (Table I, p.120, \cite{ARBILEO}). This is Theorem 1.1, (1.1), of \cite{Sun} with $q\rightarrow\sqrt{q}$ in (\ref{qana1}).
\subsection{Case k=2}
Here $k=2$ is even, so we use (\ref{polyeven}) to get
\begin{eqnarray*}
P^e_{2}(z)=-b_2(1)+b_2(2)z-b_2(3)z^2
\end{eqnarray*}
where $b_2(l)$, $1\leq l\leq 3$ are given by (\ref{evenbs}). We need to evaluate $a_2(0), a_2(1), a_2(2)$\\ and $a_2(3)$. Using (\ref{evenas}) we obtain 
\begin{eqnarray*}
a_2(0)=-1,\;a_2(1)=-7,\;a_2(2)=-12,\;a_2(3)=-6.
\end{eqnarray*}
Thus (\ref{evenbs}) yields
\begin{eqnarray*}
b_2(1)=-1,\;b_2(2)=4,\;b_2(3)=-1.
\end{eqnarray*}
Hence we obtain
\begin{eqnarray}
P^e_{2}(z)=1+4z+z^2
\end{eqnarray}
which when used in (\ref{qaz4}) yields the results in \cite{Sun} and \cite{AG}. Here again $T_{4}(\tau/2)=0$ (Table I, p.120, \cite{ARBILEO}).
\subsection{Case: k=3}
Here we need to evaluate the coefficients $b_3(1), b_3(2), b_3(3), b_3(4),$\\$b_3(5)$ and the corresponding $a_3(0), a_3(1), a_3(2), a_3(3), a_3(4), a_3(5)$. Using (\ref{evenas}) we get
\begin{eqnarray*}
&&a_3(0)=-1,\;a_3(1)=-31,\;a_3(2)=-180,\;a_3(3)=-390,\;a_3(4)=-360,\\&&a_3(5)=-120
\end{eqnarray*}
and using (\ref{evenbs}) we obtain
\begin{eqnarray*}
b_3(1)=-1,\;b_3(2)=26,\;b_3(3)=-66,\;b_3(4)=26,\;b_3(5)=-1.
\end{eqnarray*}
Using these values in (\ref{polyodd}) we obtain
\begin{eqnarray*}
P^o_{10}(z)&=&(1+z)^6P^e_4(z)-32zP^e_4(z^2)\\
&=&(1+z)^6(1+26z+66z^2+26z^3+z^4)\\&&-32z(1+26z^2+66z^4+26z^6+z^8)\\
&=& z^{10}+237z^8+1682z^6+1682z^4+237z^2+1\\
&=&(z^2+1)(z^8+236z^6+1446z^4+236z^2+1)
\end{eqnarray*}
which when used in (\ref{qaz5}) along with the change of variable $q\rightarrow \sqrt{q}$ gives us the result in \cite{AG}. We note here that in \cite{AG} we obtained explicitly $T_{6}(\tau/2)=\phi^{12}(q)$ where $\phi(q)=\prod_{n=1}^\infty (1-q^n)$ is the Euler's function.
\subsection{Case: k=4}
Here we use (\ref{evenas}) and (\ref{evenbs}) to obtain
\begin{eqnarray*}
&& a_4(0)=-1,\; a_4(1)=-127,\;a_4(2)=-1932,\;a_4(3)=-10206,\;a_4(4)=-25200,\\&& a_4(5)=-31920,\;a_4(6)=-20160,\;a_4(7)=-5040
\end{eqnarray*} 
and 
\begin{eqnarray*}
&&b_4(1)=-1,\;b_4(2)=120,\;b_4(3)=-1191,\;b_4(4)=2416,\;b_4(5)=-1191,\\&& b_4(6)=120, b_4(7)=-1.
\end{eqnarray*}
Thus we have
\begin{eqnarray*}
P^e_6(z)=z^6+120z^5+1191z^4+2416z^3+1191z^2+120z+1
\end{eqnarray*}
which when used in (\ref{qaz4}) gives us the following $q$-analogue of $\zeta(8)=\pi^8/9450$
\begin{eqnarray}
\sum_{n=0}^\infty \dfrac{q^{2n}\;P^e_{6}(q^{2n+1})}{(1-q^{2n+1})^{8}}-T_{8}(\tau/2)=136q\prod_{n=1}^\infty\dfrac{(1-q^{2n})^{16}}{(1-q^{2n-1})^{16}}.
\end{eqnarray}
\subsection{Case: k=5}
Here we use (\ref{evenas}) and (\ref{evenbs}) to obtain
\begin{eqnarray*}
&& a_5(0)=-1,\; a_5(1)=-511,\;a_5(2)=-18660,\;a_5(3)=-204630,\\&&a_5(4)=-1020600, a_5(5)=-2739240,\;a_5(6)=-4233600,\;a_5(7)=-3780000,\\&&a_5(8)=-1814400,a_5(9)=-362880
\end{eqnarray*} 
and 
\begin{eqnarray*}
&&b_5(1)=-1,\;b_5(2)=502,\;b_5(3)=-14608,\;b_5(4)=88234,b_5(5)=-156190,\\&&b_5(6)=88234, b_5(7)=-14608,\;b_5(8)=502,\;b_5(9)=-1.
\end{eqnarray*}
Thus from (\ref{polyodd}) we obtain the polynomial 
\begin{eqnarray*}
P^o_{18}(z)&=&(1+z)^{10}P^e_{8}(z)-1024zP^e_{8}(z^2)\\
&=& (1+z)^{10}\left(z^8+502z^7+14608z^6+88234z^5+156190z^4+88234z^3\right.\\ &&\left.+14608z^2+502z+1\right) -512z\left(z^{16}+502z^{14}+14608z^{12}+88234z^{10}\right.\\&&\left.+156190z^8+88234z^6+14608z^4+502z^2+1\right)\\
&=& (1+z^2)(z^{16}+19672z^{14}+1736668z^{12}+19971304z^{10}+49441990z^8\\&&+19971304z^6+1736668z^4+19672z^2+1).
\end{eqnarray*}
Using this in (\ref{qaz5}) with $q\rightarrow\sqrt{q}$ we obtain the following $q$-analogue of $\zeta(10)=\pi^{10}/93555$ 
\begin{eqnarray*}
\sum_{n=0}^\infty \dfrac{q^{n}(1+q^{2n+1})\;S_{8}(q^{2n+1})}{(1-q^{2n+1})^{10}}-T_{10}(\tau/2)=2031616q^2\prod_{n=1}^\infty\dfrac{(1-q^{2n})^{20}}{(1-q^{2n-1})^{20}}
\end{eqnarray*}
where 
\begin{eqnarray*}
S_8(z)&=&z^{8}+19672z^{7}+1736668z^{6}+19971304z^{5}+49441990z^4\\&&+19971304z^3+1736668z^2+19672z+1.
\end{eqnarray*}
\section{Acknowledgement}
The author is grateful to Krishnaswami Alladi for his constant support, encouragement and stimulating discussions. He sincerely thanks Frank Garvan for several interesting discussions on the problem and providing him with some useful references. He also expresses his appreciation to George Andrews for his support. Finally, he thanks the anonymous referees for their feedback on the manuscript which improved exposition. 

\end{document}